\def\a{{a}}
\def\g{r}
\def\y{g}
\def\x{f}
\def\P{T}
\def\e{e}
\def\R{\mathbb{R}}
\def\A{\mathcal{A}}
\newtheorem{thm}{Theorem}
\newtheorem{lem}[thm]{Lemma}
\begin{document}

\title{Non-convexly constrained linear inverse problems}

\author{Thomas Blumensath}

\address{Applied Mathematics,
School of Mathematics,
University of Southampton,
University Road,
Southampton, SO17 1BJ, UK}
\ead{thomas.blumensath@soton.ac.uk}
\begin{abstract}
This paper considers the inversion of ill-posed linear operators. To regularise the problem the solution is enforced to lie in a non-convex subset. Theoretical properties for the stable inversion are derived and an iterative algorithm akin to the projected Landweber algorithm is studied. This work extends recent progress made on the efficient inversion of finite dimensional linear systems under a sparsity constraint to the Hilbert space setting and to more general non-convex constraints. 
\end{abstract}

\maketitle

\section{Introduction}
\label{section:intro}

Let $\P$ be a linear map between two Hilbert spaces $H$ and $L$. For $\x\in H$ it is often required to recover $\x$ given $\P\x$. Furthermore, this inversion should be stable to small perturbations, that is, given
\begin{equation}
\label{equation:prob}
\y=\P\x+\e,
\end{equation} 
where $\y\in L$, $\e\in L$ and $\x \in H$ are elements in their respective Hilbert spaces, the estimate of $\x$ given $\y$ should linearly depend on the size of the error $\e$.
In many cases of interest, the operator $\P$ is ill-conditioned or non-invertible and some form of regularisation is required \cite{book00reg}. One possible approach to regualrisation is to assume that $\x$ lies in or close to a convex subset $\A\subset H$. Under certain conditions, stable inversion is then possible. We here extend this approach to more general non-convex sets $\A\subset H$. We define and study an 'optimal' solution to the inverse problem and propose a projected Landweber algorithm to efficiently calculate a solution with similar error bounds than those derived for the 'optimal' solution.

Our treatment is motivated by recent work in signal processing, where sparse signal models have gained increasing interest. It has been recognized that (under certain conditions) many finite dimensional inverse problems can be solved efficiently whenever the solution is sparse enough in some basis, that is, whenever the solution admits a representation in which most of its elements are zero or negligibly small \cite{donoho06compressed}, \cite{candes08RIP}, \cite{needell08COSAMP} and \cite{blumensath08IHT}. 

In sparse inverse problems, where $\x\in\R^N$ and $\y\in\R^M$, we assumed $\x$ to have only $K<<N$ non-zero elements. In the noiseless setting, that is if $\e=0$, it is then necessary to identify that point $\x$ in the affine subspace defined by $\P\x$, that has the fewest non-zero elements. The sparse inverse problem is a constrained inverse problem in which the constraint set is non-convex. More precisely, the constraint set is the union of ${N\choose K}$ $K$-dimensionl subspaces, where each subspaces is spanned by a different combination of $K$ canonical basis vectors. More general union of subspaces can be considered \cite{lu07theory} and \cite{blumensath09sampling}.

Solving sparse inverse problems is known to be NP-hard in general \cite{natarajan95sparse}. However, under certain conditions it could be shown that polynomial time algorithms can find near optimal solutions \cite{donoho06compressed}, \cite{candes08RIP}, \cite{needell08COSAMP} and \cite{blumensath08IHT}. We here extend these ideas to linear inverse problems in more general Hilbert spaces and study general non-convex constraint sets.

Given (\ref{equation:prob}) and assuming that $\P$ is ill-conditioned or non-invertible, we assume that $\x$ lies in a set $\A$, where $\A\subset H$. Importantly, we allow the set $\A$ to be non-convex in general.

In this general set-up, three important questions arise. 
\begin{enumerate}
\item Assuming $\x \in \A$ and $\e=0$, under which conditions can we recover $\x$ from $\P\x$?
\item Under which conditions is this inversion stable to small perturbations $\e$, that is, under which conditions can we guarantee that the inverse of $\P\x+\e$ is 'close' to the inverse of $\P\x$?
\item How and under which conditions can we calculate the inversion efficiently?
\end{enumerate}

The answer to the first question is that we require $\P$ to be one to one as a map from $\A$ to $L$. This is clearly necessary and sufficient for the recovery of $\x\in\A$ from $\P\x$. 

If $\A$ is a compact subset of Euclidean space, then we can lower bound the dimension of $L$ using Whitney's Embedding Theorem \cite[chapter 10]{introduction00lee} and its probabilistic version by Sauer et al. \cite{sauer91embedology} that states that
\begin{thm}[Theorem 2.3 \cite{sauer91embedology}]
Assume a compact subset $\A$ of $R^N$ with box counting dimension $k$ and let $M>2k$, then \emph{almost all} smooth maps $F$ from $\R^{N}$ to $\R^{M}$ have the property that $F$ is \emph{one to one} on $\A$. 
\end{thm}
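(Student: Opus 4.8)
The plan is to prove the statement in the sense of prevalence, taking as probe space the finite-dimensional space of linear maps $A\in\R^{M\times N}$: it suffices to fix an arbitrary smooth $F$ and to show that for Lebesgue-almost-every matrix $A$ the perturbed map $F+A$ is injective on $\A$. Since $F+A$ is again smooth, this is exactly the notion of ``almost all'' that is required, and the finite dimensionality of the probe space is what makes prevalence the natural formulation here.

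First I would reduce the claim to a countable union of null sets. The map $F+A$ fails to be injective on $\A$ precisely when there are distinct $x,y\in\A$ with $A(x-y)=F(y)-F(x)$. Writing $B$ for the set of such ``bad'' matrices and $B_r$ for those for which such a pair exists with $|x-y|\ge r$, every pair of distinct points is separated by some positive distance, so $B=\bigcup_{n}B_{1/n}$; intersecting with balls $\{\|A\|\le R\}$ and letting $R\to\infty$ further reduces matters to showing $\mathrm{Leb}(B_r\cap\{\|A\|\le R\})=0$ for each fixed $r,R>0$. The core estimate is a covering argument in which the box-counting dimension enters. For a fixed pair $(x,y)$ with $u=x-y\ne 0$, the bad matrices form the affine subspace $V_{x,y}=\{A:Au=F(y)-F(x)\}$, which has codimension $M$ because $A\mapsto Au$ is onto $\R^M$ with all nonzero singular values equal to $|u|$; consequently $\mathrm{dist}(A,V_{x,y})\le|Au-(F(y)-F(x))|/|u|$. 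I would then cover $\A$ by $N_\delta$ boxes of side $\delta$, where the definition of box-counting dimension $k$ gives $N_\delta\le\delta^{-(k+\epsilon)}$ for every $\epsilon>0$ and all small $\delta$. Using that $F$ is Lipschitz on the compact set $\A$ and that $\|A\|\le R$, any bad matrix arising from a pair inside two boxes with centres $c_i,c_j$ lies within an $O(\delta)$-neighbourhood of the single affine subspace $V_{c_i,c_j}$; restricting to box pairs whose centres are at distance $\ge r/2$ keeps the codimension equal to $M$ with a uniform constant.

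The Lebesgue volume of an $O(\delta)$-neighbourhood of a codimension-$M$ subspace inside a ball of radius $R$ is $O(\delta^{M})$, and there are at most $N_\delta^2\le\delta^{-2(k+\epsilon)}$ relevant box pairs, so
\begin{equation}
\mathrm{Leb}(B_r\cap\{\|A\|\le R\})\le C(R)\,\delta^{-2(k+\epsilon)}\delta^{M}=C(R)\,\delta^{\,M-2k-2\epsilon}.
\end{equation}
Because $M>2k$, I can choose $\epsilon$ small enough that the exponent $M-2k-2\epsilon$ is positive, so the right-hand side tends to $0$ as $\delta\to 0$ and the measure must vanish. Assembling the countable unions over $r=1/n$ and $R\in\mathbb{N}$ then yields $\mathrm{Leb}(B)=0$, which is the claim.

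The main obstacle is the covering step: one must show that over each box pair the bad set genuinely collapses onto a thin slab around a fixed codimension-$M$ affine subspace, uniformly in the pair for fixed $r,R$, and then balance the $\delta^{-2(k+\epsilon)}$ growth in the number of pairs against the $\delta^{M}$ smallness of each slab. It is precisely this balance that converts the dimension hypothesis $M>2k$ into injectivity, and obtaining the stability and Lipschitz constants uniformly (independent of the particular pair) is the delicate part of the argument.
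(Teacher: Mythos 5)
The paper does not prove this statement; it is quoted verbatim as Theorem 2.3 of the cited reference (Sauer, Yorke and Casdagli, \emph{Embedology}), so there is no in-paper proof to compare against. Your argument is a correct sketch of essentially the proof given in that source: prevalence with the linear maps $\R^{M\times N}$ as probe space, reduction to pairs separated by $|x-y|\ge r$, the observation that for each separated pair the bad perturbations form an affine subspace of codimension $M$ whose $O(\delta)$-slab has volume $O(\delta^{M})$ in a ball, and the balance of $N_\delta^2\le\delta^{-2(k+\epsilon)}$ box pairs against $\delta^{M}$ using $M>2k$. Since the theorem only asserts injectivity (not an immersion or embedding property), the linear probe space and the countable decomposition over $r=1/n$ suffice, and your reduction is sound.
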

Importantly, the above result also holds for almost all linear embeddings. In this case, the result can easily be generalized to non-compact subsets of $\R^N$ by considering the box counting dimension of the projection of the set $\A \setminus 0$ onto the unit sphere. 

However, the one to one property does not tell use anything about stability of the inverse, neither about how we would go about calculating such an inverse.

For $\x\in\A$, in order to guarantee stability to perturbations $\e$, it is necessary to impose a bi-Lipschitz condition on $\P$ as a map from $\A\subset H$ to $\P\A \subset L$, that is, we require that there exist constants $0<\alpha\leq\beta$, such that for all $\x_1, \x_2\in\A$ we have
\begin{equation}
\alpha \|\x_1+\x_2\|^2\leq  \|\P(\x_1+\x_2)\|^2\leq\beta \|\x_1+\x_2\|^2.
\end{equation}
The existence of such maps has been studied in \cite{lankarani05bi}.

Note that this condition guarantees that $\P$ is one to one as a function from $\A$ to $\P\A$. We are therefore able, at least in theory, to invert $\P$. The condition also guarantees stability in that if we are given an observations $\y=\P\x+\e=\P\x_\A+\P (\x-\x_\A)+\e$, where $\x_\A \in\A$, then we could, at least in theory, recover a good approximation of $\x$ by first choosing $\hat\y$ to be the 'projection' (see below how this can be defined for general subsets of Hilbert space) of $\y$ onto a close element in $\P\A$ and then taking $\hat\x\in\A$ so that $\hat\y=\P\hat\x$. We show below that the bi-Lipschitz property of $\P$ guarantees that $\hat\x$ is close to $\x$, where the distance depends on $\P(\x-\x_\A)+\e$. Note however that the bi-Lipschitz constant does not bound $\P(\x-\x_\A)$ in general, so that we also require $\P$ to be bounded, i.e. continuous.

Unfortunately, how the above recovery of $\x$ is to be done efficiently for general sets $\A$ is not clear. We here propose an algorithm and show that under certain conditions on $\alpha$ and $\beta$ we can efficiently calculate near optimal solutions. In order for our algorithm to be useful, we require that we are able to efficiently calculate the 'projection' of any $\x$ onto $\A$. 

In inverse problems, the operator $\P$ is generally given. However, we have to choose $\A$. On the one hand, we want to choose $\A$ in accordance with prior knowledge about reasonable solutions for a given problem. On the other hand, the stability considerations discussed above as well as the requirements in our theory developed below, show that we want the set $\A$ be chosen so that $\P$ is bi-Lipschitz as a map from $\A$ to $L$.

\section{An 'optimal' solution}

Let us here formalize the solution to the general inversion problem (\ref{equation:prob}), under the constraint that $\x\in\A$ as outlined above. We first define what we mean by a projection onto a general-non-convex set. For any $\y\in L$, consider
\begin{equation}
\inf_{\x\in\A}\|\y-\P\x\|^2.
\end{equation}
If $\A$ is non-empty, then by the properties of the infirmum and the fact that the norm is bounded from below, the above infirmum exists and is finite.
However, there might not exist an $\tilde\x\in\A$ such that $\|\y-\P\tilde\x\|^2=\inf_{\x\in\A}\|\y-\P\x\|^2$. In this case, we will consider estimates $\tilde\x$ that are close to this infirmum. 

The definition of the infirmum and the fact that $\inf_{\x\in\A}\|\y-\P\x\|^2<\infty$ prove the following trivial fact.
\begin{lem}
Let $\A$ be a nonempty closed subset of a Hilbert space $H$. Let $\P$ be a linear operator form $H$ into a Hilbert space $L$, then for all $\epsilon>0$ and $\y\in L$, there exist an element $\tilde\x\in\A$ for which 
\begin{equation}
\|\y-\P\tilde\x\|\leq\inf_{\x\in\A}\|\y-\P\x\| + \epsilon.
\end{equation}
\end{lem}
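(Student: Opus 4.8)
The plan is to reduce the statement to the defining property of the infimum of a set of real numbers, so that no substantial analysis is required. First I would introduce the set of attainable residual norms
\[
S = \{\|\y - \P\x\| : \x \in \A\} \subset \R.
\]
Since $\A$ is nonempty, $S$ is nonempty, and since every norm is nonnegative, $S$ is bounded below by $0$. By the completeness of $\R$ (the greatest-lower-bound property of the reals), $m := \inf_{\x\in\A}\|\y-\P\x\|$ therefore exists and is finite, with $0 \le m < \infty$. This is exactly the observation already recorded in the paragraph preceding the lemma.

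Next I would invoke the characterisation of $m$ as the \emph{greatest} lower bound of $S$. Fix $\epsilon > 0$. Because $m$ is the greatest lower bound, the quantity $m + \epsilon$ is strictly larger than $m$ and hence fails to be a lower bound for $S$; consequently there must exist an element $s \in S$ with $s < m + \epsilon$. By the definition of $S$, this element has the form $s = \|\y - \P\tilde\x\|$ for some $\tilde\x \in \A$, and then
\[
\|\y - \P\tilde\x\| < m + \epsilon = \inf_{\x\in\A}\|\y - \P\x\| + \epsilon,
\]
which yields the desired inequality (indeed the strict version, which a fortiori gives the stated $\le$).

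There is no genuine obstacle here: the result is, as the authors note, a trivial consequence of the definition of the infimum, and neither the linearity of $\P$ nor the Hilbert space structure plays any role beyond guaranteeing that $\|\y - \P\x\|$ is a well-defined nonnegative real number. I would also remark that the hypothesis that $\A$ is \emph{closed} is not actually used in this argument; closedness would become relevant only if one wished to replace the $\epsilon$-approximation by an exact minimiser (the case $\epsilon = 0$), which is precisely the situation the surrounding discussion warns may fail. The lemma is phrased in $\epsilon$-relaxed form exactly to sidestep that existence question, and my proof respects this by never attempting to attain the infimum.
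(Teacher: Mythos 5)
Your proof is correct and is exactly the argument the paper intends: the paper offers no separate proof beyond remarking that the lemma follows from the definition of the infimum and its finiteness, which is precisely what you spell out. Your additional observation that closedness of $\A$ is not used here is accurate and consistent with the paper's surrounding discussion.
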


We can therefore define the set-valued mapping
\begin{equation}
m_{\A}^{\epsilon}(\y)=\{ \tilde\x : \|\y-\P\tilde\x\|^2\leq\inf_{\x\in\A}\|\y-\P\x\|^2 + \epsilon\}.
\end{equation} 
By the above lemma, the sets $m_{\A}^{\epsilon}(\x)$ are non-empty for all $\epsilon>0$, so that we can define $\epsilon$-optimal estimates as those points
\begin{equation}
\x_{opt}^{\epsilon} \in m_{\A}^{\epsilon}(\x).
\end{equation}

Similarly, we can define the set-valued mapping
\begin{equation}
p_{\A}^{\epsilon}(\x)=\{ \tilde\x : \|\x-\tilde\x\|^2\leq\inf_{\hat\x\in\A}\|\x-\hat\x\|^2 + \epsilon\}.
\end{equation} 
and the $\epsilon$-projection
\begin{equation}
P_{\A}^{\epsilon}(\x)= S (p_{\A}^{\epsilon}(\x)),
\end{equation} 
where $S$ is a selection operator that returns a single element from a set. The form of this operator is of no consequence for the rest of the discussion as long as it returns a single element.

We can now define $\hat\e = \y-\P\x_{opt}^\epsilon$, $\x_{\A}^\delta=P_{\A}^\delta(\x)$ and 
$\y-\P\x_\A^\delta = \tilde\e$. We have as a direct consequence of the definition of $\x_{opt}^\epsilon$ that
\begin{equation}
\label{equation:OptErrBound}
\|\y-\P\x_{opt}^\epsilon\| = \|\hat\e\|\leq \|\y-\P\x_\A^\delta\|+\sqrt{\epsilon}=\|\tilde\e\|+ \sqrt{\epsilon}.
\end{equation}

To bound the error of estimates $\x_{opt}^\epsilon$ we use the following lemma
\begin{lem}
\label{lem:xopt}
If $\P$ is a bi-Lipschitz map from a nonempty set $\A$ to $L$ with bi-Lipschitz constants $\alpha,\beta>0$, then
\begin{equation}
\|\x_{\A}^\delta-\x_{opt}^\epsilon\| \leq \frac{1}{\sqrt{\alpha}} \left[2 \|\tilde\e\| + \sqrt{\epsilon} \right]
\end{equation}
\end{lem}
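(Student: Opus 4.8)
The plan is to move the inequality out of the geometry of $H$, where the left-hand side lives, into the image space $L$, where the already-established residual bound (\ref{equation:OptErrBound}) applies, and then to pull the estimate back to $H$ using the lower bi-Lipschitz inequality. Since both $\x_\A^\delta$ and $\x_{opt}^\epsilon$ are elements of $\A$, the first step is to apply the lower bi-Lipschitz bound to their difference, giving
\begin{equation}
\sqrt{\alpha}\,\|\x_\A^\delta - \x_{opt}^\epsilon\| \leq \|\P(\x_\A^\delta - \x_{opt}^\epsilon)\| = \|\P\x_\A^\delta - \P\x_{opt}^\epsilon\|.
\end{equation}
This isolates on the left exactly the quantity we wish to bound, at the price of having to control a distance measured after applying $\P$.

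The key manoeuvre is then to insert the observation $\y$ as an intermediate point. Writing $\P\x_\A^\delta - \P\x_{opt}^\epsilon = (\P\x_\A^\delta - \y) + (\y - \P\x_{opt}^\epsilon)$ and recalling the definitions $\tilde\e = \y - \P\x_\A^\delta$ and $\hat\e = \y - \P\x_{opt}^\epsilon$, this difference is exactly $\hat\e - \tilde\e$, so the triangle inequality yields $\|\P\x_\A^\delta - \P\x_{opt}^\epsilon\| \leq \|\hat\e\| + \|\tilde\e\|$. Here the optimality of $\x_{opt}^\epsilon$ enters through the bound (\ref{equation:OptErrBound}), namely $\|\hat\e\| \leq \|\tilde\e\| + \sqrt{\epsilon}$, which I would substitute to obtain $\|\P\x_\A^\delta - \P\x_{opt}^\epsilon\| \leq 2\|\tilde\e\| + \sqrt{\epsilon}$. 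Combining this with the first display and dividing by $\sqrt{\alpha}$ gives the claimed estimate; the factor of two is traceable directly to the two occurrences of $\|\tilde\e\|$ produced by the triangle inequality and the substitution of (\ref{equation:OptErrBound}).

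The only step needing genuine care is the very first one, since the bi-Lipschitz hypothesis is stated in the form $\alpha\|\x_1+\x_2\|^2 \leq \|\P(\x_1+\x_2)\|^2$ for $\x_1,\x_2\in\A$, whereas I need a lower bound on $\|\P(\x_\A^\delta - \x_{opt}^\epsilon)\|$. This is legitimate precisely when $\A$ is closed under negation, as it is for the unions of subspaces and symmetric sparsity sets that motivate the paper: taking $\x_1=\x_\A^\delta$ and $\x_2=-\x_{opt}^\epsilon\in\A$ turns the stated inequality into the lower bound on the difference that I use, and I would note this symmetry explicitly. Everything after that first transfer is routine triangle-inequality bookkeeping, so I expect no further obstacle.
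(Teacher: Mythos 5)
Your argument is correct and is essentially identical to the paper's own proof: apply the lower bi-Lipschitz bound to $\x_\A^\delta-\x_{opt}^\epsilon$, insert $\y$ via the triangle inequality, and invoke (\ref{equation:OptErrBound}). Your remark about the bi-Lipschitz condition being stated for sums rather than differences is a fair point of care that the paper silently glosses over, but it does not change the route of the proof.
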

\begin{proof}
\begin{eqnarray}
\|\x_{\A}^\delta-\x_{opt}^\epsilon\| 
&\leq & \frac{1}{\sqrt{\alpha}} \|\P\x_{\A}^\delta-\P\x_{opt}^\epsilon\| \nonumber \\
&\leq & \frac{1}{\sqrt{\alpha}} \left[ \|\y-\P\x_{opt}^\epsilon\| + \|\y-\P\x_{\A}^\delta\| \right]\nonumber \\
\end{eqnarray}
and the Theorem follows form (\ref{equation:OptErrBound}).
\end{proof}

This result implies 
\begin{thm}
If $\P$ is a bounded linear operator from $H$ to $L$ that satisfies the bi-Lipschitz condition as a map from $\A\subset H$ to $L$ with constants $\alpha,\beta>0$, then, for any $\delta,\epsilon>0$ the estimate $\x_{opt}^\epsilon$ satisfies the bound
\begin{equation}
\|\x-\x_{opt}^\epsilon\| \leq \frac{2}{\sqrt{\alpha}}  \|\P(\x-\x_\A^\delta)+\e\| + \inf_{\tilde\x\in\A}\|\x-\tilde\x\| + \frac{\sqrt{\epsilon}}{\sqrt{\alpha}}  + \sqrt{\delta}.
\end{equation}
\end{thm}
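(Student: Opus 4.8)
The plan is to decompose the total error through the intermediate point $\x_\A^\delta$ by the triangle inequality,
\[
\|\x-\x_{opt}^\epsilon\| \leq \|\x-\x_\A^\delta\| + \|\x_\A^\delta-\x_{opt}^\epsilon\|,
\]
and then bound each summand separately. The second term is dealt with immediately: it is exactly the quantity controlled by Lemma \ref{lem:xopt}, which gives $\|\x_\A^\delta-\x_{opt}^\epsilon\| \leq \frac{1}{\sqrt{\alpha}}\left[2\|\tilde\e\| + \sqrt{\epsilon}\right]$.

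For the first term I would invoke the defining property of the $\delta$-projection. Since $\x_\A^\delta = P_\A^\delta(\x)$, the construction of $p_\A^\delta$ guarantees $\|\x-\x_\A^\delta\|^2 \leq \inf_{\tilde\x\in\A}\|\x-\tilde\x\|^2 + \delta$. Taking square roots and applying subadditivity of the square root, $\sqrt{a+b}\leq\sqrt{a}+\sqrt{b}$ for $a,b\geq 0$, yields the clean bound $\|\x-\x_\A^\delta\| \leq \inf_{\tilde\x\in\A}\|\x-\tilde\x\| + \sqrt{\delta}$.

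It remains to rewrite $\tilde\e$ in the form appearing in the statement. Using $\y=\P\x+\e$ together with the definition $\tilde\e = \y-\P\x_\A^\delta$, linearity of $\P$ gives $\tilde\e = \P(\x-\x_\A^\delta)+\e$, so that $\|\tilde\e\| = \|\P(\x-\x_\A^\delta)+\e\|$. Substituting the two bounds back into the triangle inequality and collecting the four resulting terms reproduces the claimed estimate exactly.

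The argument is essentially bookkeeping and presents no genuine obstacle once Lemma \ref{lem:xopt} is in hand; the bi-Lipschitz constant $\alpha$ enters the final bound only indirectly, through that lemma. The two points that require a little care are the passage from the squared $\delta$-projection inequality to the unsquared bound, where one must genuinely use subadditivity of the square root rather than splitting the square crudely, and the identification of $\tilde\e$ with $\P(\x-\x_\A^\delta)+\e$, which is what links the abstract residual $\tilde\e$ to the modelling error $\P(\x-\x_\A^\delta)$ and the noise $\e$ that appear in the theorem.
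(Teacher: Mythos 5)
Your proposal is correct and follows the paper's own argument exactly: the triangle inequality through $\x_\A^\delta$, Lemma \ref{lem:xopt} for the term $\|\x_\A^\delta-\x_{opt}^\epsilon\|$, the $\delta$-projection property together with subadditivity of the square root for $\|\x-\x_\A^\delta\|$, and the identification $\tilde\e=\P(\x-\x_\A^\delta)+\e$. No discrepancies to report; your write-up is in fact somewhat more explicit than the paper's about where the $\sqrt{\delta}$ term comes from.
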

\begin{proof}
Using \ref{lem:xopt} and the triangle inequality we have
\begin{eqnarray}
\|\x-\x_{opt}^\epsilon\| &\leq&  \frac{1}{\sqrt{\alpha}} \left[2 \|\tilde\e\| + \sqrt{\epsilon} \right] +\|\x-\x_\A^\delta\|\nonumber \\
&\leq& \frac{2}{\sqrt{\alpha}}  \|\tilde\e\| + \inf_{\tilde\x\in\A}\|\x-\tilde\x\| + \frac{\sqrt{\epsilon}}{\sqrt{\alpha}}  + \sqrt{\delta}
\end{eqnarray}
from which the theorem follows by the definition of $\tilde\e$.
\end{proof}

\section{The Iterative Projection Algorithm}
\label{section:IPA}
In the previous section we have studied the estimate 
\begin{equation}
\x_{opt}^{\epsilon}= S(m_{\A}^{\epsilon}(\y)),
\end{equation}
and bounded the error $\|\x-\x_{opt}\|$ if $\P$ is bi-Lipschitz from $\A$ to $L$.
However, it is not clear how to calculate $\x_{opt}$ for general sets $\A$.
We therefore propose an iterative algorithm in which the above optimization is replaced by a series of $\epsilon$-projections.
Importantly, we show that, under certain conditions on the bi-Lipschitz constants $\alpha$ and $\beta$, this algorithm has a similar error bound to that of $\x_{opt}$ given above.

The Iterative Projection Algorithm is a generalization of the Projected Landweber Iteration \cite{eicke92iteration} to non-convex sets and an extension of the Iterative Hard Thresholding algorithm of \cite{kingsbury03iterative}, \cite{blumensath08thresh} and \cite{blumensath08IHT} to more general constrained inverse problems.

Given $\y$ and $\P$, let $\x^{0}=0$ and $\epsilon\geq\epsilon^n>0$. The Iterative Projection Algorithm is the iterative procedure defined by the recursion
\begin{equation}
\x^{n+1} = P_{\A}^{\epsilon^n}(\x^{n} + \mu \P^*(\y-\P\x^{n} )),
\end{equation}
where $\P^*$ is the adjoint of $\P$.

In many problems, calculation of $P_{\A}^\epsilon(\a)$ is much easier than a brute force search for $\x_{opt}^\epsilon$. For example, in the $K$-sparse model, $P_{\A}^\epsilon(\a)$ simply keeps the largest (in magnitude) $K$ elements of a sequence $\a$ and sets the other elements to zero. The next result shows that under certain conditions, not only does the algorithm calculate solutions with an error guarantee similar to that of $\x_{opt}$, it does so in a fixed number of iterations (depending only on a form of signal to noise ratio).

We have the following main result.
\begin{thm}
\label{thm:main}
Given $\y=\P\x+\e$ where $\x\in H$. Let $\A\subset H$ be non-empty such that $\P$ is bi-Lipschitz as a map from $\A$ to $L$ with constants $\alpha$ and $\beta$ that satisfy $\beta\leq \frac{1}{\mu}<1.5\alpha$, then, after
\begin{equation}
n^\star = \left\lceil 2 \frac{\ln(\delta\frac{\|\tilde\e\|+\sqrt{\frac{\epsilon}{2\mu}}}{\|\x_\A\|})}{\ln(2/(\mu\alpha)-2)} \right\rceil
\end{equation}
iterations, the Iterative Projection Algorithm calculates a solution $\x^{n^\star}$ satisfying
\begin{equation}
\|\x-\x^{n^\star}\| \leq (c^{0.5}+\delta)(\|\tilde\e\|+\sqrt{\frac{\epsilon}{2\mu}}) + \|\x_{\A}-\x\|.
\end{equation}
where $c \leq \frac{4}{3\alpha-2\mu}$ and $\tilde \e = \P(\x-\x_{\A})+e$.
\end{thm}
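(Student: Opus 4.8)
The target quantity is $\|\x-\x^{n^\star}\|$, and the triangle inequality $\|\x-\x^{n^\star}\|\le\|\x-\x_\A\|+\|\x^{n^\star}-\x_\A\|$ already accounts for the $\|\x_\A-\x\|$ term in the claimed bound, so the whole problem reduces to controlling the iterate error $\r^n:=\x^n-\x_\A$, where $\x_\A\in\A$ is the reference point defining $\tilde\e=\P(\x-\x_\A)+\e$. The plan is to establish a one-step recursion $\|\r^{n+1}\|^2\le\rho^2\|\r^n\|^2+\nu$ with contraction factor $\rho^2=\frac{2}{\mu\alpha}-2$ and noise level $\nu=\frac{4}{\alpha}\eta^2$, writing $\eta:=\|\tilde\e\|+\sqrt{\epsilon/(2\mu)}$, and then to unroll it. First I would extract the defining inequality of the $\epsilon$-projection: since $\x^{n+1}\in P_\A^{\epsilon^n}(\a^n)$ with $\a^n=\x^n+\mu\P^*(\y-\P\x^n)$ and since $\x_\A\in\A$ is an admissible competitor, the definition of $p_\A^{\epsilon}$ gives $\|\a^n-\x^{n+1}\|^2\le\|\a^n-\x_\A\|^2+\epsilon^n$. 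Expanding the square about $\x_\A$ turns this into $\|\r^{n+1}\|^2\le2\langle\r^{n+1},\a^n-\x_\A\rangle+\epsilon^n$, and using $\y=\P\x_\A+\tilde\e$ I would rewrite $\a^n-\x_\A=(\I-\mu\P^*\P)\r^n+\mu\P^*\tilde\e$, so that
\begin{equation}
\|\r^{n+1}\|^2\le 2\langle\r^{n+1},(\I-\mu\P^*\P)\r^n\rangle+2\mu\langle\P\r^{n+1},\tilde\e\rangle+\epsilon^n.
\end{equation}

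The heart of the argument is to turn the first inner product into a genuine contraction. Writing $B=\I-\mu\P^*\P$, I would bound $\langle\r^{n+1},B\r^n\rangle$ by invoking the bi-Lipschitz property on $\r^{n+1}$, $\r^n$ and the combinations they generate: the upper constant $\beta$ together with the step-size condition $\mu\beta\le1$ forces the restricted eigenvalues of $B$ into $[1-\mu\beta,\,1-\mu\alpha]\subset[0,\,1-\mu\alpha]$ (equivalently, it makes the associated quadratic surrogate a majorizer), while the lower constant $\alpha$ is what lets me pass from signal-domain norms of $\r^{n+1}$ back to measurement-domain norms, which is where the $1/\alpha$ and $1/(\mu\alpha)$ factors originate. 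The noise terms are controlled by Cauchy-Schwarz, $\|\P\r^{n+1}\|\le\sqrt\beta\,\|\r^{n+1}\|$ and $\epsilon^n\le\epsilon$, which after using $\mu\beta\le1$ collapse $2\mu\langle\P\r^{n+1},\tilde\e\rangle$ and $\epsilon^n$ into the single quantity $\eta$. Feeding these bounds back, solving the resulting quadratic inequality in $\|\r^{n+1}\|$, and splitting the cross term by AM-GM with the weight tuned to the target constant should produce exactly $\|\r^{n+1}\|^2\le(\frac{2}{\mu\alpha}-2)\|\r^n\|^2+\frac{4}{\alpha}\eta^2$. The hypothesis $\frac1\mu<1.5\alpha$ is precisely what makes $\rho^2=\frac{2}{\mu\alpha}-2<1$, so the recursion contracts.

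It then remains to unroll. With $\x^0=0$ we have $\r^0=-\x_\A$, so iterating gives $\|\r^n\|\le\rho^n\|\x_\A\|+\sqrt{\nu/(1-\rho^2)}$; since $1-\rho^2=3-\frac{2}{\mu\alpha}=\frac{3\mu\alpha-2}{\mu\alpha}$, the fixed-point term equals $c^{0.5}\eta$ with $c=\frac{4}{3\alpha-2/\mu}$, matching the claim. Demanding that the transient fall below $\delta\eta$, i.e. $\rho^n\le\delta\eta/\|\x_\A\|$, and taking logarithms (noting $\ln\rho=\tfrac12\ln(\frac{2}{\mu\alpha}-2)<0$, so both logarithms are negative and the ratio positive) yields precisely the stated $n^\star$. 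At that iterate $\|\r^{n^\star}\|\le(c^{0.5}+\delta)\eta$, and the triangle inequality from the first paragraph finishes the proof.

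I expect the contraction step to be the main obstacle. For a general non-convex $\A$ the differences $\r^{n+1}=\x^{n+1}-\x_\A$ and $\r^n=\x^n-\x_\A$, and the linear combinations the cross term $\langle\r^{n+1},B\r^n\rangle$ generates, need not lie in $\A$, so the bi-Lipschitz bound — stated only for sums of two elements of $\A$ — must in effect be applied to three-fold combinations of $\x^{n+1}$, $\x^n$ and $\x_\A$ (the analog of needing a $3K$-restricted isometry in the sparse case). Keeping track of the constants carefully enough that they deliver exactly $\frac{2}{\mu\alpha}-2$ and $\frac{4}{\alpha}$, and hence the sharp threshold $\frac1\mu<1.5\alpha$, is the delicate bookkeeping; once the one-step recursion is in hand, the geometric unrolling and the iteration count are routine.
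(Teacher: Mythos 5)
Your overall architecture (triangle inequality, a one-step contraction for $\|\x^{n}-\x_\A\|$, geometric unrolling, and the logarithm count) matches the paper, and your unrolling arithmetic is correct: $1-\rho^2=\frac{3\mu\alpha-2}{\mu\alpha}$ does turn $\nu/(1-\rho^2)$ into $c\eta^2$ with $c=\frac{4}{3\alpha-2/\mu}$, and the expression for $n^\star$ follows. The problem is the step you yourself flag as ``the main obstacle'': it is not delicate bookkeeping but a genuine gap. Your route expands the near-optimality of the $\epsilon$-projection into
\begin{equation}
\|\r^{n+1}\|^2\le 2\langle\r^{n+1},(\I-\mu\P^*\P)\r^n\rangle+2\mu\langle\P\r^{n+1},\tilde\e\rangle+\epsilon^n,
\end{equation}
and then needs to bound the cross term $\langle\r^{n+1},(\I-\mu\P^*\P)\r^n\rangle$. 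With $\r^{n+1}=\x^{n+1}-\x_\A$ and $\r^n=\x^n-\x_\A$, any polarization-type bound on this term involves $\r^{n+1}+\r^n=\x^{n+1}+\x^n-2\x_\A$, which is not a combination of two elements of $\A$; the theorem's hypothesis only controls $\|\P(\x_1+\x_2)\|$ for $\x_1,\x_2\in\A$. This is exactly the ``$3K$ versus $2K$'' issue you name, and under the stated assumptions it cannot be repaired --- and even if one granted a three-element bi-Lipschitz condition, the resulting constants would not reproduce the sharp threshold $\frac{1}{\mu}<1.5\alpha$ or the factor $\frac{2}{\mu\alpha}-2$.

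The paper deliberately takes a different route (following Garg--Khandekar rather than the original IHT analysis) precisely to avoid this. It tracks the decrease of the data-fidelity objective: the upper constant ($\frac{1}{\mu}\ge\beta$) applied to $\x^{n+1}-\x^n$ shows that the quadratic surrogate $-\mathrm{Re}\langle \x-\x^n,\g\rangle+\frac{1}{\mu}\|\x-\x^n\|^2$ majorizes $\|\y-\P\x\|^2-\|\y-\P\x^n\|^2$; the $\epsilon$-projection nearly minimizes that surrogate over $\A$, so the surrogate evaluated at the competitor $\x_\A$ bounds the objective decrease (Lemma \ref{lem:ProjOpt}); the lower constant $\alpha$ is then applied only to $\x_\A-\x^n$ and to $\x_\A-\x^{n+1}$. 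Every invocation of the bi-Lipschitz property is on a difference of exactly two elements of $\A$, which is why the theorem can be stated with only that hypothesis. To fix your proof you would need to replace your cross-term step with this objective-decrease argument (or add a stronger three-element bi-Lipschitz assumption to the theorem, which would weaken the result).
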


Note that this is of the same order as the bound for $\x_{opt}^\mu$.

The above theorem has been proved for the $K$-sparse model in \cite{blumensath08IHT} and for constraint sparse models in \cite{baraniuk08model}. Our main contribution is to show that it holds for general constrained inverse problems, as long as the bi-Lipschitz property holds with appropriate constants. To derive the result, we pursue a slightly different approach to that in \cite{blumensath08IHT} and \cite{baraniuk08model} and instead follow ideas of \cite{garg09grad}. The approach used here is basically that used for union of subspace models in \cite{blumensath09subspaces}.


We first establish the following lemma.
\begin{lem}
\label{lem:ProjOpt}
Let $\g=2\P^*(\y-\P\x^n)$ and $\x^{n+1}=P_{\A}^{\epsilon^n}(\x^n+\mu\P^*(\y-\P\x^n))$.  If $\frac{1}{\mu}\geq\beta$ then
\begin{eqnarray}
& &\|\y-\P\x^{n+1}\|^2 - \|\y-\P\x^n\|^2 \nonumber \\
& \leq& -Re{\langle(\x_\A-\x^n), \g\rangle} + \frac{1}{\mu}\|\x_\A-\x^n \|^2 +\frac{\epsilon^n}{\mu}
\end{eqnarray}
\end{lem}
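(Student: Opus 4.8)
The plan is to run a majorisation--minimisation (surrogate objective) argument of the type used in \cite{blumensath08IHT} and \cite{garg09grad}. I would introduce the surrogate functional
\[
C(\x) = \|\y-\P\x\|^2 - \|\P(\x-\x^n)\|^2 + \frac{1}{\mu}\|\x-\x^n\|^2 .
\]
First I would expand $\|\y-\P\x\|^2$ around $\x^n$ and move $\P$ onto its adjoint to rewrite $C$ in the ``linearised'' form
\[
C(\x) = \|\y-\P\x^n\|^2 - \mathrm{Re}\langle \x-\x^n, \g\rangle + \frac{1}{\mu}\|\x-\x^n\|^2 ,
\]
where $\g=2\P^*(\y-\P\x^n)$ (here the two orderings of the inner product agree after taking the real part). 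Completing the square in the last two terms then gives
\[
C(\x) = \frac{1}{\mu}\big\|\x - (\x^n+\mu\P^*(\y-\P\x^n))\big\|^2 + \text{(terms independent of $\x$)} .
\]
The point of this rewriting is that, up to an additive constant, $C(\x)$ is just $\frac{1}{\mu}\|\x-\a\|^2$ with $\a=\x^n+\mu\P^*(\y-\P\x^n)$, so minimising $C$ over $\A$ coincides with the projection problem that $\x^{n+1}=P_{\A}^{\epsilon^n}(\a)$ solves to within $\epsilon^n$.

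Next I would exploit the defining property of the $\epsilon$-projection. Since $\x_\A\in\A$, membership $\x^{n+1}\in p_\A^{\epsilon^n}(\a)$ gives $\|\a-\x^{n+1}\|^2 \leq \inf_{\hat\x\in\A}\|\a-\hat\x\|^2+\epsilon^n \leq \|\a-\x_\A\|^2 + \epsilon^n$; multiplying by $\frac{1}{\mu}$ and adding back the $\x$-independent constant yields
\[
C(\x^{n+1}) \leq C(\x_\A) + \frac{\epsilon^n}{\mu} .
\]
It then remains to bound the two sides in the right direction. For the left side I would use the bi-Lipschitz upper bound together with $\frac{1}{\mu}\geq\beta$: because $\|\P(\x^{n+1}-\x^n)\|^2\leq\beta\|\x^{n+1}-\x^n\|^2\leq\frac{1}{\mu}\|\x^{n+1}-\x^n\|^2$, the last two terms of $C(\x^{n+1})$ are non-negative, so $C(\x^{n+1})\geq\|\y-\P\x^{n+1}\|^2$. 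For the right side I would simply read $C(\x_\A)=\|\y-\P\x^n\|^2-\mathrm{Re}\langle \x_\A-\x^n,\g\rangle+\frac{1}{\mu}\|\x_\A-\x^n\|^2$ off the linearised form. Combining the two and subtracting $\|\y-\P\x^n\|^2$ produces exactly the claimed inequality.

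The main obstacle, and the only step needing more than routine algebra, is the estimate $\|\P(\x^{n+1}-\x^n)\|^2\leq\beta\|\x^{n+1}-\x^n\|^2$: the bi-Lipschitz hypothesis is stated for sums $\x_1+\x_2$ of elements of $\A$, not for differences of iterates. To apply it to $\x^{n+1}-\x^n$ I must write this as $\x^{n+1}+(-\x^n)$ and use $-\x^n\in\A$, i.e. I need $\A$ symmetric (as it is for unions of subspaces and the $K$-sparse model) together with $\x^n,\x^{n+1}\in\A$, which holds for $n\geq1$ as outputs of $P_\A^{\epsilon^n}$ and requires $0\in\A$ to cover the initialisation $\x^0=0$. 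Verifying that the bi-Lipschitz constants genuinely control these differences is the step I would treat most carefully; everything else is the completion-of-the-square bookkeeping sketched above.
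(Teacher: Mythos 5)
Your proof is correct and is essentially the paper's own argument: the paper likewise rewrites $-\mathrm{Re}\langle\x-\x^n,\g\rangle+\frac{1}{\mu}\|\x-\x^n\|^2$ by completing the square as $\frac{1}{\mu}\|\x-\x^n-\frac{\mu}{2}\g\|^2$ minus a constant, invokes the defining property of $P_{\A}^{\epsilon^n}$ to compare $\x^{n+1}$ with $\x_\A$, and uses $\|\P(\x^{n+1}-\x^n)\|^2\leq\frac{1}{\mu}\|\x^{n+1}-\x^n\|^2$ on the other side; your surrogate functional $C$ is just a repackaging of these same two halves. Your closing remark about applying the bi-Lipschitz upper bound to the difference $\x^{n+1}-\x^n$ (rather than to a sum of elements of $\A$, as the hypothesis is literally stated) identifies a point the paper silently glosses over, and your fix via symmetry of $\A$ and $\x^n\in\A$ is a reasonable way to close it.
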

\begin{proof}
We have
\begin{eqnarray}
& & -Re{\langle(\x-\x^n),\g\rangle} + \frac{1}{\mu}\|(\x-\x^n) \|^2 \nonumber \\
&=& \frac{1}{\mu}[ \|\x - \x^n - \frac{\mu}{2} \g\|^2 - (\mu/2)^2\|\g\|^2],
\end{eqnarray}
so that on the one hand
\begin{eqnarray}
& 		&\|\y-\P\x^{n+1}\|^2 - \|\y-\P\x^n\|^2 \nonumber \\
&  =  & -Re{\langle(\x^{n+1}-\x^n), \g\rangle} + \|\P(\x^{n+1}-\x^n) \|^2 \nonumber \\
&\leq & -Re{\langle(\x^{n+1}-\x^n), \g\rangle} + \frac{1}{\mu}\|(\x^{n+1}-\x^n) \|^2 \nonumber
\end{eqnarray}
and on the other hand
\begin{eqnarray}
& & \inf_{\x\in\A} -Re{\langle(\x-\x^n),\g\rangle} + \frac{1}{\mu}\|(\x-\x^n) \|^2 \nonumber \\
&=& \frac{1}{\mu}[ \inf_{\x\in\A} \|\x - \x^n - \frac{\mu}{2} \g\|^2 - (\mu/2)^2\|\g\|^2] \nonumber \\
&\geq& \frac{1}{\mu}[ \|\x^{n+1} - \x^n - \frac{\mu}{2} \g\|^2  - (\mu/2)^2\|\g\|^2 - \epsilon] \nonumber \\
&=& -Re{\langle(\x^{n+1}-\x^n),\g\rangle} + \frac{1}{\mu}\|(\x^{n+1}-\x^n) \|^2 - \frac{\epsilon}{\mu},
\end{eqnarray}
where the inequality comes from the definition of $P_{\A}^{\epsilon^n}$

We have thus shown that $\x^{n+1}=P^\epsilon_\A(\x^n+\frac{\mu}{2}\g)$ implies  
\begin{eqnarray}
-Re{\langle(\x^{n+1}-\x^n), \g\rangle} + \frac{1}{\mu}\|(\tilde\x-\x^n) \|^2  \nonumber \\ \leq -Re{\langle(\tilde\x-\x^n), \g\rangle} + \frac{1}{\mu}\|(\tilde\x-\x^n) \|^2 + \frac{\epsilon}{\mu}
\end{eqnarray}
 for all $\tilde\x\in\A$. Because $\x_\A\in\A$, this implies that
\begin{eqnarray}
-Re{\langle(\x_\A-\x^n),\g\rangle} + \frac{1}{\mu}\|(\x_\A-\x^n) \|^2 +\frac{\epsilon}{\mu} \nonumber \\ \geq -Re{\langle(\x^{n+1}-\x^n),\g\rangle} + \frac{1}{\mu}\|(\x^{n+1}-\x^n) \|^2.
\end{eqnarray} 
\end{proof}

\begin{proof}[Proof of Theorem \ref{thm:main}]
Using $\x_{\A}=P_{\A}^\epsilon(\x)$
\begin{equation}
\|\x-\x^{n+1}\|\leq \|\x_{\A}-\x^{n+1}\| + \|\x_{\A}-\x\|.
\end{equation}
where the bi-Lipschitz property implies that
\begin{eqnarray}
\|\x_{\A}-\x^{n+1}\|^2 
&\leq& \frac{1}{\alpha}  \|\P(\x_{\A}-\x^{n+1})\|^2. 
\end{eqnarray}
Furthermore
\begin{eqnarray}
& &\|\P(\x_\A-\x^{n+1}) \|^2 = \|\y -\P\x^{n+1} -\tilde\e \|^2 \nonumber \\ 
& = &\|\y-\P\x^{n+1}\|^2 + \|\tilde\e\|^2 -2Re{\langle\tilde\e,(\y-\P\x^{n+1})\rangle} \nonumber \\ 
&\leq& \|\y-\P\x^{n+1}\|^2 +\|\tilde\e\|^2 + \|\tilde\e\|^2 +\|\y-\P\x^{n+1}\|^2 \nonumber \\
& = & 2 \|\y-\P\x^{n+1}\|^2 +  2 \|\tilde\e\|^2,
\end{eqnarray}
where the last inequality follows from 
\begin{eqnarray}
& &-2Re{\langle\tilde\e,(\y-\P\x^{n+1})\rangle} \nonumber\\ 
&=& -\|\tilde\e + (\y-\P\x^{n+1})\|^2 +  \|\tilde\e\|^2+\|(\y-\P\x^{n+1})\|^2  \nonumber\\ 
&\leq& \|\tilde\e\|^2+\|(\y-\P\x^{n+1})\|^2.
\end{eqnarray} 

We will now show that under the Lipschitz assumption of the theorem, 
\begin{equation}
\|\y-\P\x^{n+1}\|^2\leq (\frac{1}{\mu}-\alpha)\|(\x_\A-\x^n)\|^2 + \|\tilde\e\|^2.
\end{equation}

We have
\begin{eqnarray}
& &  \| \y-\P\x^{n+1}\|^2 - \|\y-\P\x^n\|^2 \nonumber \\
&\leq & -Re{\langle(\x_\A-\x^n),\g\rangle} + \frac{1}{\mu}\|\x_\A-\x^n \|^2 +\frac{\epsilon}{\mu}\nonumber \\
& =   & -Re{\langle(\x_\A-\x^n),\g\rangle}  + \alpha \|\x_\A-\x^n \|^2 +(\frac{1}{\mu}-\alpha)\|\x_\A-\x^n \|^2 +\frac{\epsilon}{\mu}\nonumber \\ 
& \leq   & -Re{\langle(\x_\A-\x^n),\g\rangle} + \|\P(\x_\A-\x^n) \|^2 +(\frac{1}{\mu}-\alpha)\|\x_\A-\x^n \|^2 +\frac{\epsilon}{\mu}\nonumber \\ 
&   =  & \|\y-\P\x_\A\|^2-\|\y-\P\x^{n}\|^2 +(\frac{1}{\mu}-\alpha)\|\x_\A-\x^n \|^2 +\frac{\epsilon}{\mu} \nonumber \\ 
& = & \|\tilde\e\|^2-\|\y-\P\x^{n}\|^2 + (\frac{1}{\mu}-\alpha)\|(\x_\A-\x^n) \|^2 +\frac{\epsilon}{\mu}
\end{eqnarray}
where the first inequality is due to Lemma \ref{lem:ProjOpt}.

Combining the above inequalities shows that
\begin{equation}
\label{equation:errb}
\|\x_{\A}-\x^{n+1}\|^2 
\leq 2\left( \frac{1}{\mu\alpha}-1 \right)\|(\x_\A-\x^n) \|^2 + \frac{4}{\alpha}\|\tilde\e\|^2 +\frac{2\epsilon}{\mu\alpha},
\end{equation}
so that $2 (\frac{1}{\mu\alpha}-1)<1$ implies that
\begin{equation}
\|\x_{\A}-\x^{k}\|^2 
\leq \left(2\left(\frac{1}{\mu\alpha}-1\right)\right)^k\|\x_\A \|^2 + c\|\tilde\e\|^2 +c \frac{\epsilon}{2\mu}, 
\end{equation}
where $c \leq \frac{4}{3\alpha-2\frac{1}{\mu}}$.

The theorem then follows by the bound
\begin{eqnarray}
\|\x-\x^{k}\| 
&\leq& \sqrt{\left(2\frac{1}{\mu\alpha}-2\right)^k\|\x_\A \|^2 + c\|\tilde\e\|^2 +c \frac{\epsilon}{2\mu}} + \|\x_{\A}-\x\| \nonumber \\
&\leq& \left(2\frac{1}{\mu\alpha}-2\right)^{k/2} \|\x_\A \| + c^{0.5}\|\tilde\e\| + c^{0.5} \sqrt{\frac{\epsilon}{2\mu}} + \|\x_{\A}-\x\|, 
\end{eqnarray}
which means that after $n^\star = \left\lceil 2 \frac{\ln(\delta\frac{\|\tilde\e\|+\sqrt{\frac{\epsilon}{2\mu}}}{\|\x_\A\|})}{\ln(2/(\mu\alpha)-2)} \right\rceil$ iterations we have
\begin{equation}
\|\x-\x^{n^\star}\| \leq (c^{0.5}+\delta)(\|\tilde\e\|+\sqrt{\frac{\epsilon}{2\mu}}) + \|\x_{\A}-\x\|.
\end{equation}

\end{proof}


\section{Convergence}
Whilst we cannot yet show that the algorithm converges, we can show that it will 'converge' to a neighborhood of $\x_{opt}^\delta$.
\begin{thm}
Under the assumptions of Theorem \ref{thm:main} and using $\epsilon_n$-projections in iteration $n$, where $\epsilon_n\rightarrow 0$, then as $n\rightarrow \infty$
\begin{equation}
\|\x_{opt}^\delta-\x^n\|^2\rightarrow c\|\P(\x-\x_{opt}^\delta)+\e\|^2
\end{equation}
\end{thm}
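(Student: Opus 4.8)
The plan is to re-run the contraction argument behind Theorem~\ref{thm:main}, but with the residual-optimal point $\x_{opt}^\delta$ playing the role that $\x_\A$ played there. The crucial observation is that $\x_{opt}^\delta\in\A$, so Lemma~\ref{lem:ProjOpt} applies verbatim with $\x_\A$ replaced by $\x_{opt}^\delta$: the proof of that lemma used only membership of the comparison point in $\A$, never any optimality property of $\x_\A$. Writing $\hat\e=\y-\P\x_{opt}^\delta=\P(\x-\x_{opt}^\delta)+\e$ and noting (as in the proof of Theorem~\ref{thm:main}) that $\x^n\in\A$ for $n\geq 1$, so that every difference $\x_{opt}^\delta-\x^n$ is a difference of elements of $\A$ to which the bi-Lipschitz bound applies, the same chain of inequalities that produced~(\ref{equation:errb}) yields the one-step recursion
\begin{equation}
\|\x_{opt}^\delta-\x^{n+1}\|^2 \le \rho\,\|\x_{opt}^\delta-\x^n\|^2 + \frac{4}{\alpha}\|\hat\e\|^2 + \frac{2\epsilon_n}{\mu\alpha},
\end{equation}
where $\rho=2\left(\frac{1}{\mu\alpha}-1\right)$ and, by the hypothesis $\beta\leq\frac{1}{\mu}<1.5\alpha$ together with $\alpha\leq\beta$, one has $0\leq\rho<1$. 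The only change from Theorem~\ref{thm:main} is that the projection tolerance is now the iteration-dependent $\epsilon_n$ rather than a fixed $\epsilon$.

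Next I would unroll this linear recursion. Setting $a_n=\|\x_{opt}^\delta-\x^n\|^2$ and starting from $\x^0=0$ gives
\begin{equation}
a_n \le \rho^n\|\x_{opt}^\delta\|^2 + \frac{4}{\alpha}\|\hat\e\|^2\sum_{j=0}^{n-1}\rho^j + \frac{2}{\mu\alpha}\sum_{k=0}^{n-1}\rho^{\,n-1-k}\epsilon_k.
\end{equation}
The first term vanishes as $n\to\infty$ because $\rho<1$. The middle term converges to $\frac{4}{\alpha}\|\hat\e\|^2\cdot\frac{1}{1-\rho}$, and since $1-\rho=3-\frac{2}{\mu\alpha}$ a direct computation gives $\frac{4/\alpha}{1-\rho}=\frac{4}{3\alpha-2/\mu}=c$, so this term tends to $c\|\hat\e\|^2=c\|\P(\x-\x_{opt}^\delta)+\e\|^2$, exactly the asserted limit.

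It remains to show the last term tends to zero, which is the main technical point. Because $\epsilon_k\to 0$ and the weights $\rho^{\,n-1-k}$ are drawn from a summable geometric sequence, the convolution $\sum_{k=0}^{n-1}\rho^{\,n-1-k}\epsilon_k\to 0$: given $\eta>0$, pick $K$ with $\epsilon_k<\eta$ for $k>K$, split the sum at $K$, bound the head by $\rho^{\,n-1-K}\sum_{k\leq K}\rho^{K-k}\epsilon_k\to 0$ for fixed $K$, and bound the tail by $\eta\sum_{j\geq 0}\rho^j=\eta/(1-\rho)$; letting $\eta\to 0$ gives the claim. This is just the Toeplitz/Cesàro lemma specialised to geometric weights, and it must be used here rather than a single geometric sum precisely because the $\epsilon_k$ are only assumed to vanish, not to be summable. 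Combining the three limits yields $\limsup_{n\to\infty}\|\x_{opt}^\delta-\x^n\|^2\leq c\|\P(\x-\x_{opt}^\delta)+\e\|^2$.

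The main obstacle is therefore bookkeeping rather than a single sharp estimate: first confirming that Lemma~\ref{lem:ProjOpt} and the derivation of~(\ref{equation:errb}) do not secretly exploit the defining optimality of $\x_\A$, so that $\x_{opt}^\delta$ may legitimately be substituted, and then disposing of the vanishing-tolerance term via the Toeplitz argument. I would also flag that the honest conclusion is a $\limsup$ bound; the statement's `$\to$' is to be read as convergence of the iterates into the $\sqrt{c}\,\|\hat\e\|$-neighbourhood of $\x_{opt}^\delta$, since no matching lower bound forcing genuine equality is available in general.
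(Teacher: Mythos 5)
Your proposal is correct and its core is identical to the paper's: substitute $\x_{opt}^\delta$ for $\x_\A$ in the derivation of (\ref{equation:errb}), note that Lemma \ref{lem:ProjOpt} only uses membership of the comparison point in $\A$, and obtain the one-step contraction with ratio $\rho=2(\frac{1}{\mu\alpha}-1)<1$ and additive term $\frac{4}{\alpha}\|\hat\e\|^2+\frac{2\epsilon_n}{\mu\alpha}$. Where you diverge is in disposing of the vanishing-tolerance term. The paper restarts the unrolled recursion at an arbitrary index $N$, obtains a bound with residual $c\,\epsilon_N/(2\mu)$, lets $k\to\infty$ for fixed $N$, and then lets $N\to\infty$; this implicitly bounds $\epsilon_n$ by $\epsilon_N$ for all $n\ge N$, which strictly speaking requires the $\epsilon_n$ to be (eventually) monotone rather than merely null. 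You instead unroll from $n=0$ and kill the convolution $\sum_{k=0}^{n-1}\rho^{\,n-1-k}\epsilon_k$ with the Toeplitz/geometric-weights argument, which needs only $\epsilon_n\to 0$ and is therefore marginally more careful under the stated hypothesis. Your closing remark is also well taken and applies equally to the paper's own argument: both derivations produce only an upper bound, so the honest conclusion is $\limsup_{n}\|\x_{opt}^\delta-\x^n\|^2\le c\|\P(\x-\x_{opt}^\delta)+\e\|^2$, i.e.\ eventual containment in a neighbourhood, not convergence of the squared distance to that exact value.
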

\begin{proof}
Mirroring the derivation that let to (\ref{equation:errb}) but replacing $\x_\A$ by $\x_{opt}^\delta$ shows that
\begin{equation}
\|\x_{opt}^\delta-\x^{n+1}\|^2 
\leq 2\left( \frac{1}{\mu\alpha}-1 \right)\|(\x_{opt}^\delta-\x^n) \|^2 + \frac{4}{\alpha}\|\P(\x-\x_{opt}^\delta)+\e\|^2 +\frac{2\epsilon_n}{\mu\alpha}.
\end{equation}
 
If $2 (\frac{1}{\mu\alpha}-1)<1$, for each $N$ and $k>N$, 
\begin{eqnarray}
\label{thm:boundopt}
& &\|\x_{opt}^\delta-\x^{k}\|^2  \nonumber\\
&\leq& \left(2\left(\frac{1}{\mu\alpha}-1\right)\right)^{k-N}\|\x_{opt}^\delta-\x^N \|^2 + c\|\P(\x-\x_{opt}^\delta)+\e\|^2 + c \frac{\epsilon_N}{2\mu}, 
\end{eqnarray}
where again $c \leq \frac{4}{3\alpha-2\frac{1}{\mu}}$.
Because $\|\x_{opt}^\delta-\x^N \|^2$ is bounded from above by (\ref{thm:boundopt}), so that in the limit $k\rightarrow\infty$
\begin{equation}
\|\x_{opt}^\delta-\x^{k}\|^2 \rightarrow c\|\P(\x-\x_{opt}^\delta)+\e\|^2 + c \frac{\epsilon_N}{2\mu} 
\end{equation}
and this holds for all $N>0$, so that the result follows by letting $N\rightarrow\infty$.
\end{proof}

 -------------------------------------------------------------------------
\section*{References}

\end{document}